\newcommand{\R}{\mathbb{R}}
\newcommand{\C}{\mathbb{C}}
\newtheorem{theorem}{Theorem}
\newtheorem{defn}{Definition}\numberwithin{defn}{section}
 \newtheorem{lem}[defn]{Lemma}
\numberwithin{equation}{section}
\title[Hardy type inequalities]{Remarks on the Hardy type inequalities\\
with remainder terms in the framework of equalities}
\dedicatory{\footnotesize\it Dedicated to Professor Nakao Hayashi on the occasion of his sixtieth birthday} 
\author[S. Machihara]{Shuji Machihara}
\author[T. Ozawa]{Tohru Ozawa}
\author[H. Wadade]{Hidemitsu Wadade}
\address{S. Machihara\,:\,Department of Mathematics, Faculty of Science, Saitama
University, Saitama 338-8570, Japan}
\email{machihar@mail.saitama-u.ac.jp}
\address{T. Ozawa\,:\,Department of Applied Physics, Waseda University, Tokyo 169-8555, Japan}
\email{txozawa@waseda.jp}
\address{H. Wadade\,:\,Faculty of Mechanical Engineering, 
Institute of Science and Engineering, 
Kanazawa University, Kanazawa, 920-1192, Japan}
\email{wadade@se.kanazawa-u.ac.jp}
\subjclass[2010]{26D10, 46E35}
\keywords{Hardy type inequalities, logarithmic Hardy inequality}
\begin{document}

\begin{abstract}
We study the Hardy type inequalities in the framework of equalities. 
We present equalities which immediately imply Hardy type inequalities by dropping the remainder term. 
Simultaneously we give a characterization of the class of functions which makes the remainder term vanish. 
A point of our observation is to apply an orthogonality properties in general Hilbert space, 
and which gives a simple and direct understanding 
of the Hardy type inequalities as well as the nonexistence of nontrivial extremizers.
\end{abstract}

\maketitle

\section{Introduction and the main results}\label{section1}
Let $\Omega$ be a domain in $\Bbb R^n$ with $n\geq 3$ and assume $0\in\Omega$.  
The classical Hardy inequality states that the inequality 
\begin{align}\label{hardy-cl}
\left(\frac{n-2}{2}\right)^2\int_\Omega\frac{|f|^2}{|x|^2}dx
\leq\int_\Omega|\nabla f|^2dx
\end{align}
holds for all $f\in H^1_0(\Omega)$, where the constant $\left(\frac{n-2}{2}\right)^2$ is best-possible. 
It is also well-known that the inequality \eqref{hardy-cl} admits no nontrivial extremizers, 
and this fact implies a possibility for  \eqref{hardy-cl} to be improved by adding some remainder terms. 
In fact, the authors in \cite{b-v} proved that  the following improved Hardy inequality 
\begin{align}\label{hardy-bv}
\left(\frac{n-2}{2}\right)^2\int_\Omega\frac{|f|^2}{|x|^2}dx+\Lambda \int_\Omega|f|^2dx
\leq\int_\Omega|\nabla f|^2dx 
\end{align}
holds for all $f\in H^1_0(\Omega)$ provided that $\Omega$ is bounded, where the constant $\Lambda$ in \eqref{hardy-bv} is given by 
$\Lambda=\Lambda(n,\Omega)=z_0^2\omega_n^{\frac{2}{n}}|\Omega|^{-\frac{2}{n}}$, 
and $\omega_n$ and $|\Omega|$ denote the Lebesgue measures of the unit ball and $\Omega$ on $\Bbb R^n$, respectively, 
and the absolute constant $z_0$ denotes the first zero of the Bessel function $J_0(z)$. 
The constant $\Lambda$ is optimal if $\Omega$ is a ball, but still the inequality \eqref{hardy-bv} admits no nontrivial extremizers. 
More generally, the authors in \cite{b-v} obtained the inequality 
\begin{align*}
\left(\frac{n-2}{2}\right)^2\int_\Omega\frac{|f|^2}{|x|^2}dx+\tilde\Lambda\left(\int_\Omega|f|^pdx\right)^{\frac{2}{p}}
\leq\int_\Omega|\nabla f|^2dx
\end{align*}
for $f\in H^1_0(\Omega)$, where $1<p<\frac{2n}{n-2}$ and $\tilde\Lambda$ is a positive constant independent of $u$. 
Similar improvements have been done for the Hardy inequality not only in the $L^2$-setting but in $L^p$-setting with 
some remainder terms, see for instance \cite{b-f-t, b-m, b-m-s, g-g-m, vz}. 

Hardy type inequalities are known as useful mathematical tools in various fields 
such as real analysis, functional  analysis, probability and partial differential equations. 
In fact, Hardy type inequalities and their improvements are applied in many contexts. 
For instance, Hardy type inequalities were utilized in investigating the stability of solutions 
of semi-linear elliptic and parabolic equations in \cite{b-v, cm}. 
As for the existence and asymptotic behavior of solutions of the heat equation involving singular potentials, see \cite{cm2, vz}. 
Among others we refer to \cite{an, b-d, d, fi, kp, maz} for the concrete applications of Hardy type inequalities. 
We also refer to \cite{da, o-k} for a comprehensive understanding of Hardy type inequalities. 

Based on the historical remarks on the Hardy type inequalities, our purpose in this paper is 
to establish the classical Hardy inequalities in the frame work of equalities 
which immediately imply the Hardy inequalities by dropping the remainder terms. 
At the same time, those equalities characterize the form of the vanishing remainder terms. 
Our method on the basis of equalities presumably provides a simple and direct 
understanding of the Hardy type inequalities as well as the nonexistence of nontrivial extremizers.

In what follows, we always assume $\Omega=\Bbb R^n$ and the standard $L^2(\R^n)$ norm is denoted by $\|\cdot\|_{2}$. 
Then the Hardy type inequalities in $L^2$-setting that we discuss in this paper are the 
following:
\begin{align}
\left\|\frac{f}{|x|}\right\|_2
&\le\frac{2}{n-2}\left\|\frac{x}{|x|}\cdot\nabla f\right\|_2, 
\qquad n\ge3, \label{eq1-1}\\
\sup_{R>0}\left\|\frac{f-f_R}{|x|^{\frac{n}{2}}\log\frac{R}{|x|}}\right\|_2
&\le2\left\|\frac{1}{|x|^{\frac{n}{2}-1}}\frac{x}{|x|}\cdot\nabla f\right\|_2, 
\qquad n\ge2, \label{eq1-2}\\
\int_0^{\infty}x^{-p-1}\left|\int_0^xf(y)dy\right|^2dx
&\le\Big(\frac{2}{p}\Big)^2\int_0^{\infty}x^{-p+1}|f(x)|^2dx, \label{eq1-3}\\
\int_0^{\infty}x^{p-1}\left|\int_x^{\infty}f(y)dy\right|^2dx
&\le\Big(\frac{2}{p}\Big)^2\int_0^{\infty}x^{p+1}|f(x)|^2dx, \label{eq1-4}
\end{align}
where $f_R(x)=f\Big(R\frac{x}{|x|}\Big)$ and $p>0$. 
The inequalities \eqref{eq1-1}, \eqref{eq1-3}, and \eqref{eq1-4} are standard 
(see \cite{f} for instance), while \eqref{eq1-2} is rather new 
(see \cite{m-o-w1, m-o-w3}). In addition, as we noticed in \cite{m-o-w3}, the logarithmic Hardy inequality 
\eqref{eq1-2} has a scaling property. 

We state our main theorems. We denote by $\partial_r$ the radial derivative 
defined by $\partial_r=\frac{x}{|x|}\cdot\nabla=\sum_{j=1}^n\frac{x_j}{|x|}\partial_j$. 
The space $D^{1,2}(\R^n)$ denotes the completion of $C_0^\infty(\Bbb R^n)$ 
under the Dirichlet norm $\|\nabla\cdot\|_2$. 
Also the notation $S^{n-1}$ denotes the unit sphere in $\Bbb R^n$ endowed with the Lebesgue measure $\sigma$. 
Our first theorem now reads:

\begin{theorem}\label{thm1}
Let $n\ge3$. Then the equalities
\begin{align}\label{eq1-5}
\left(\frac{n-2}{2}\right)^2\left\|\frac{f}{|x|}\right\|_2^2
&=\|\partial_rf\|_2^2-\left\|\partial_rf+\frac{n-2}{2|x|}f\right\|_2^2 \\
&=\|\partial_rf\|_2^2-\left\||x|^{-\frac{n-2}{2}}\partial_r(|x|^{\frac{n-2}2}f)\right\|_2^2
\label{eq1-6}
\end{align}
hold for all $f\in D^{1,2}(\R^n)$. 
Moreover, the second term in the right hand side of \eqref{eq1-5} or \eqref{eq1-6} vanishes if and only if $f$ takes 
the form 
\begin{equation}\label{eq1-7}
f(x)=|x|^{-\frac{n-2}{2}}\varphi\Big(\frac{x}{|x|}\Big)
\end{equation}
for some function $\varphi:S^{n-1}\to\C$, which makes the left hand side of 
\eqref{eq1-5} infinite unless $\int_{S^{n-1}}|\varphi(\omega)|^2d\sigma(\omega)=0$: 
\begin{equation}\label{eq1-8}
\frac{|f|^2}{|x|^2}=\frac{\big|\varphi\big(\frac{x}{|x|}\big)\big|^2}{|x|^n}
\notin L^1(\R^n). 
\end{equation}
\end{theorem}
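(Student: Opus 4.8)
The plan is to reduce both equalities to elementary identities—a pointwise product rule for \eqref{eq1-6} and a single radial integration by parts for \eqref{eq1-5}—first on $C_0^\infty(\R^n)$, then to extend to $D^{1,2}(\R^n)$ by density, and finally to read the characterization off directly from the condition that the squared remainder vanishes. I would begin with \eqref{eq1-6}, which is in fact a pointwise identity: since $\partial_r|x|^{\frac{n-2}{2}}=\frac{n-2}{2}|x|^{\frac{n-2}{2}-1}$, the product rule for the radial derivative gives $|x|^{-\frac{n-2}{2}}\partial_r\bigl(|x|^{\frac{n-2}{2}}f\bigr)=\partial_rf+\frac{n-2}{2|x|}f$ at every $x\neq 0$. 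Thus the two remainder terms coincide pointwise and \eqref{eq1-6} is immediate.

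For \eqref{eq1-5} I would first take $f\in C_0^\infty(\R^n)$. Expanding the square and using $2\,\mathrm{Re}(\bar f\,\partial_rf)=\partial_r|f|^2$, the asserted equality becomes equivalent to the single scalar identity $\int_{\R^n}|x|^{-1}\partial_r|f|^2\,dx=-(n-2)\int_{\R^n}|x|^{-2}|f|^2\,dx$. This is exactly a radial integration by parts: in polar coordinates $x=r\omega$ one has, for each fixed $\omega\in S^{n-1}$, $\int_0^\infty r^{n-2}\partial_r|f|^2\,dr=-(n-2)\int_0^\infty r^{n-3}|f|^2\,dr$, where the boundary contributions vanish because $r^{n-2}|f(r\omega)|^2\to0$ both as $r\to0$ (here $n\ge3$ is used) and as $r\to\infty$ (compact support). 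Integrating over $S^{n-1}$ yields the scalar identity, hence \eqref{eq1-5} on $C_0^\infty(\R^n)$.

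To upgrade to arbitrary $f\in D^{1,2}(\R^n)$ I would argue by density. Dropping the nonnegative remainder in the $C_0^\infty$ version of \eqref{eq1-5} already yields the Hardy inequality \eqref{eq1-1} on $C_0^\infty(\R^n)$; applied to differences, and combined with the pointwise bound $|\partial_r f|\le|\nabla f|$, it shows that $f\mapsto f/|x|$ is bounded from the Dirichlet norm to $L^2$, since $\|(f_j-f_k)/|x|\|_2\le\frac{2}{n-2}\|\partial_r(f_j-f_k)\|_2\le\frac{2}{n-2}\|\nabla(f_j-f_k)\|_2$. Hence for $f_j\to f$ in $D^{1,2}(\R^n)$ with $f_j\in C_0^\infty$ all three terms in \eqref{eq1-5} converge, and passing to the limit extends the equality to $D^{1,2}(\R^n)$. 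This density step, together with the justification of the boundary terms in the integration by parts near the singularity at the origin, is the only point requiring genuine care; the remaining manipulations are purely algebraic.

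For the characterization, note that an $L^2$ norm vanishes precisely when its argument is zero a.e., so by \eqref{eq1-6} the remainder vanishes if and only if $\partial_r\bigl(|x|^{\frac{n-2}{2}}f\bigr)=0$ a.e., i.e. $|x|^{\frac{n-2}{2}}f$ is independent of the radial variable and therefore a function of $x/|x|$ alone; writing this function as $\varphi$ gives exactly \eqref{eq1-7}. Substituting \eqref{eq1-7} produces $|f|^2/|x|^2=|\varphi(x/|x|)|^2/|x|^n$, and passing to polar coordinates gives $\int_{\R^n}|f|^2/|x|^2\,dx=\bigl(\int_{S^{n-1}}|\varphi(\omega)|^2\,d\sigma(\omega)\bigr)\int_0^\infty \frac{dr}{r}$. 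Since $\int_0^\infty \frac{dr}{r}=\infty$, the left-hand side of \eqref{eq1-5} is infinite unless $\int_{S^{n-1}}|\varphi|^2\,d\sigma=0$, which is precisely \eqref{eq1-8}.
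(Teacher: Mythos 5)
Your proposal is correct and follows essentially the same route as the paper: the heart of both arguments is the radial integration by parts $\int_0^\infty r^{n-2}\partial_r|f|^2\,dr=-(n-2)\int_0^\infty r^{n-3}|f|^2\,dr$ for $f\in C_0^\infty$, combined with the expansion of the square $\left\|\partial_rf+\frac{n-2}{2|x|}f\right\|_2^2$ (which the paper packages abstractly as its ``orthogonality lemma,'' Lemma \ref{lem1}, with $c=\frac{1}{n-2}$, $u=\frac{f}{|x|}$, $v=\partial_rf$), followed by density and the observation that the remainder vanishes iff $\partial_r\bigl(|x|^{\frac{n-2}{2}}f\bigr)=0$. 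The only differences are cosmetic: you inline the lemma's algebra directly, prove \eqref{eq1-6} pointwise first rather than deriving it from \eqref{eq1-5}, and spell out the density step that the paper delegates to its references.
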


We remark that as in \eqref{eq1-8}, functions of the form \eqref{eq1-7} 
imply the nonexistence of nontrivial extremizers for \eqref{eq1-1}. 
The corresponding integral
diverges at both origin and infinity. 
A similar result to Theorem \ref{thm1} can be found in \cite{b-d-k, d-v}. 
However, the essential ideas for the proofs are different. Indeed, the proof in \cite{b-d-k} is done by direct calculations with respect to the quotient with the optimizer of a Hardy type inequality. On the other hand, we shall prove Theorem \ref{thm1} by applying an orthogonality argument in general Hilbert space settings. More precisely, an equality 
\begin{equation}\label{eq1-21}
\left(\frac{n-2}{2}\right)^2\left\|\frac{f}{|x|}\right\|_2^2
=\|\nabla f\|_2^2-\left\|\nabla f+\frac{n-2}{2}\frac{x}{|x|^2}f\right\|_2^2
\end{equation}
has been observed in \cite{b-d-k, d-v}. We should remark that \eqref{eq1-5} and \eqref{eq1-21} are
the same for radially symmetric functions and are not the same for nonradial functions.
In fact, the Dirichlet integral is decomposed into radial and spherical components as
\begin{equation}\notag
\|\nabla f\|_2^2=\|\partial_r f\|_2^2+\sum_{j=1}^n
\left\|\left(\partial_j-\frac{x_j}{|x|}\partial_r\right)f\right\|_2^2.
\end{equation}

Next, we state the logarithmic Hardy type equalities in the critical weighted Sobolev spaces. 

\begin{theorem}\label{thm2}
Let $n\ge2$. Then the equalities
\begin{align}\label{eq1-9}
\frac14\left\|\frac{f-f_R}{|x|^{\frac{n}{2}}\log\frac{R}{|x|}}\right\|_2^2
&=\left\|\frac{1}{|x|^{\frac{n}{2}-1}}\partial_rf\right\|_2^2
-\left\|\frac{1}{|x|^{\frac{n}{2}-1}}\left(\partial_rf
+\frac{f-f_R}{2|x|\log\frac{R}{|x|}}\right)\right\|_2^2 \\
&=\left\|\frac{1}{|x|^{\frac{n}{2}-1}}\partial_rf\right\|_2^2
-\left\|\frac{\big|\log\frac{R}{|x|}\big|^{\frac{1}{2}}}{|x|^{\frac{n}{2}-1}}
\partial_r\left(\frac{f-f_R}{\big|\log\frac{R}{|x|}\big|^{\frac12}}\right)\right\|_2^2
\label{eq1-10}
\end{align}
hold for all $R>0$ and all $f\in L^1_{loc}(\R^n)$ with 
$\frac{1}{|x|^{\frac{n}{2}-1}}\nabla f\in L^2(\R^n)$, where $f_R$ is defined by 
$f_R(x)=f\Big(R\frac{x}{|x|}\Big)$.
Moreover, the second term in the right hand side of \eqref{eq1-9} or \eqref{eq1-10} vanishes if and only if $f-f_R$ takes 
the form 
\begin{equation}\label{eq1-11}
f(x)-f_R(x)=\left|\log\frac{R}{|x|}\right|^{\frac{1}{2}}\varphi\Big(\frac{x}{|x|}\Big)
\end{equation}
for some function $\varphi:S^{n-1}\to\C$, which makes the left hand side of 
\eqref{eq1-9} infinite unless $\int_{S^{n-1}}|\varphi(\omega)|^2d\sigma(\omega)=0$: 
\begin{equation}\label{eq1-12}
\frac{|f-f_R|^2}{|x|^n\Big|\log\frac{R}{|x|}\Big|^2}
=\frac{\big|\varphi\big(\frac{x}{|x|}\big)\big|^2}{|x|^n\Big|\log\frac{R}{|x|}\Big|}
\notin L^1(\R^n). 
\end{equation}
\end{theorem}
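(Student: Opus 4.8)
The plan is to reduce both equalities \eqref{eq1-9} and \eqref{eq1-10} to a single real-orthogonality relation in $L^2(\R^n)$ and then to verify that relation by one integration by parts in the radial variable. Writing $g=f-f_R$, $r=|x|$ and $L=\log\frac{R}{|x|}$, I would first record the two structural facts that drive everything: since $f_R$ depends only on $x/|x|$ one has $\partial_r f_R=0$, hence $\partial_r g=\partial_r f$; and since $\partial_r L=-1/r$, a direct differentiation gives
$$|L|^{\frac12}\partial_r\!\left(\frac{g}{|L|^{\frac12}}\right)=\partial_r f+\frac{g}{2rL}=\partial_r f+\frac{f-f_R}{2|x|\log\frac{R}{|x|}}.$$
This identity shows that the second terms appearing in \eqref{eq1-9} and in \eqref{eq1-10} coincide, so it suffices to prove \eqref{eq1-9}.

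Setting $A=|x|^{-(\frac n2-1)}\partial_r f$ and $B=|x|^{-(\frac n2-1)}\frac{g}{2rL}$, the left-hand side of \eqref{eq1-9} equals exactly $\|B\|_2^2$, while its right-hand side is $\|A\|_2^2-\|A+B\|_2^2$. Using the expansion $\|A\|_2^2-\|A+B\|_2^2=\|B\|_2^2-2\,\mathrm{Re}\langle A+B,B\rangle$, I see that \eqref{eq1-9} is equivalent to the single orthogonality relation $\mathrm{Re}\langle A+B,B\rangle=0$, equivalently $\mathrm{Re}\langle A,B\rangle=-\|B\|_2^2$. This is precisely the Hilbert-space orthogonality alluded to in the discussion after Theorem \ref{thm1}, and it is the only analytic content of the equality.

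To establish it I would pass to polar coordinates $x=r\omega$ and, for fixed $\omega\in S^{n-1}$, compute
$$\mathrm{Re}\langle A,B\rangle=\mathrm{Re}\int_{S^{n-1}}\!\!\int_0^\infty\frac{(\partial_r g)\,\overline g}{2L}\,dr\,d\sigma(\omega)=\int_{S^{n-1}}\!\!\int_0^\infty\frac{1}{4L}\,\partial_r|g|^2\,dr\,d\sigma(\omega),$$
using $\mathrm{Re}(\partial_r g\,\overline g)=\tfrac12\partial_r|g|^2$. Integrating by parts in $r$ on $(0,R)$ and $(R,\infty)$ separately, and using $\partial_r(1/L)=1/(rL^2)$, the bulk term is precisely $-\|B\|_2^2$, so the relation reduces to the vanishing of the boundary contributions of $\tfrac14|g|^2/L$ at $r=0$, $r=R$ and $r=\infty$. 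At $r=R$ the cancellation is built in: by definition $g(R\omega)=f(R\omega)-f_R(R\omega)=0$, and since $L$ vanishes only linearly there while $|g|^2$ vanishes quadratically, $|g|^2/L\to0$ from both sides; at $r=0$ and $r=\infty$ one has $1/L\to0$ while the finiteness of $\|A\|_2$ controls the growth of $|g|^2$. \emph{Making this boundary analysis rigorous under only the stated hypothesis} $|x|^{-(\frac n2-1)}\nabla f\in L^2$ \emph{is the main obstacle}: the integrand $(\partial_r g)\overline g/L$ is merely bounded near $r=R$ and the vanishing of the limits at $0$ and $\infty$ is not immediate from square-integrability alone, so I expect to need an approximation by smooth, suitably supported functions (parallel to the proof of Theorem \ref{thm1}) together with a limiting argument, or a direct estimate extracting a sequence $r_k\to0,\infty$ along which the boundary terms tend to $0$.

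For the final two assertions, the equality case of \eqref{eq1-9} forces $\|A+B\|_2=0$, i.e. $\partial_r\!\big(g/|L|^{1/2}\big)=0$ for almost every $\omega$ on each radial interval $(0,R)$ and $(R,\infty)$; hence $g/|L|^{1/2}$ is independent of $r$ and equals some $\varphi(x/|x|)$, which is exactly \eqref{eq1-11}. Substituting \eqref{eq1-11} into the left-hand side of \eqref{eq1-9} yields $\frac{|f-f_R|^2}{|x|^n|\log\frac{R}{|x|}|^2}=\frac{|\varphi(x/|x|)|^2}{|x|^n|\log\frac{R}{|x|}|}$, and integrating in polar coordinates with the substitution $t=\log\frac{R}{|x|}$ reduces the radial integral to $\int_{-\infty}^\infty\frac{dt}{|t|}$, which diverges at $t=0$ (the sphere $|x|=R$) as well as at $t=\pm\infty$ (the origin and infinity). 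Thus the left-hand side is infinite unless $\int_{S^{n-1}}|\varphi(\omega)|^2\,d\sigma(\omega)=0$, giving \eqref{eq1-12} and the asserted nonexistence of nontrivial extremizers.
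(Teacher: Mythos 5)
Your proposal is correct and takes essentially the same route as the paper: your orthogonality relation $\mathrm{Re}\langle A+B,B\rangle=0$ is precisely the paper's Lemma \ref{lem1} (with $u=2B$, $v=A$, $c=1$), and the paper likewise verifies it by the same radial integration by parts carried out separately on $|x|<R$ and $|x|>R$, killing the boundary terms via $\log\frac{R}{r}\ge\frac{R-r}{R}$ (and its analogue for $r>R$) combined with $|f(r\omega)-f(R\omega)|\le\|\nabla f\|_\infty|R-r|$. The ``main obstacle'' you flag is resolved in the paper exactly as you anticipate: the identity is proved for $f\in C_0^\infty(\R^n;\C)$ and extended to the stated class by density.
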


As in \eqref{eq1-12}, functions of the form \eqref{eq1-11} 
imply the nonexistence of nontrivial extremizers for \eqref{eq1-2}. The corresponding integral 
diverges at both origin and infinity and, in addition, on the sphere of radius $R>0$. 

The final theorem in this paper is one-dimensional Hardy type equalities stated as follows. 

\begin{theorem}\label{thm3}
Let $n=1$ and $p>0$. Then:
\begin{enumerate}
 \item The equalities
 \begin{align}
 &\label{eq1-13}\left(\frac{p}{2}\right)^2\int_0^{\infty}x^{-p+1}\left|\frac{1}{x}\int_0^xf(y)dy\right|^2dx \\
 &\notag=\int_0^{\infty}x^{-p+1}|f(x)|^2dx
 -\int_0^{\infty}x^{-p+1}\left|f(x)-\frac{p}{2x}\int_0^xf(y)dy\right|^2dx  \\
 &\notag=\int_0^{\infty}x^{-p+1}|f(x)|^2dx
 -\int_0^{\infty}x\left|\frac{d}{dx}\left(x^{-\frac{p}{2}}\int_0^xf(y)dy\right)\right|^2dx 
 \end{align}
 hold for all $f\in L^1_{\text{loc}}(0,\infty)$ with $|x|^{-p+1}|f|^2\in L^1(0,\infty)$. 
 Moreover, the second term in the right hand side of \eqref{eq1-13} vanishes if and only if 
 \begin{equation}\notag
 \int_0^xf(y)dy=cx^{\frac{p}{2}}
 \end{equation}
 for some $c\in\C$, which makes the left hand side of \eqref{eq1-13} infinite unless 
 $c=0$:
 \begin{equation}\notag
 x^{-p+1}\left|\frac{1}{x}\int_0^xf(y)dy\right|^2=|c|^2x^{-1}\notin L^1(0,\infty). 
 \end{equation}
 \item The equalities
 \begin{align}
&\label{eq1-17}\left(\frac{p}{2}\right)^2\int_0^{\infty}x^{p+1}\left|\frac{1}{x}\int_x^{\infty}f(y)dy\right|^2dx \\
 &\notag=\int_0^{\infty}x^{p+1}|f(x)|^2dx
 -\int_0^{\infty}x^{p+1}\left|f(x)-\frac{p}{2x}\int_x^{\infty}f(y)dy\right|^2dx\\
 &\notag=\int_0^{\infty}x^{p+1}|f(x)|^2dx
 -\int_0^{\infty}x\left|\frac{d}{dx}\left(x^{\frac{p}{2}}\int_x^{\infty}f(y)dy\right)\right|^2dx 
 \end{align}
 hold for all $f\in L^1_{\text{loc}}(0,\infty)$ with $|x|^{p+1}|f|^2\in L^1(0,\infty)$. 
 Moreover, the second term in the right hand side of \eqref{eq1-17} vanishes if and only if 
 \begin{equation}\notag
 \int_x^{\infty}f(y)dy=cx^{-\frac{p}{2}}
 \end{equation}
 for some $c\in\C$, which makes the left hand side of \eqref{eq1-17} infinite unless 
 $c=0$:
 \begin{equation}\notag
 x^{p+1}\left|\frac{1}{x}\int_x^{\infty}f(y)dy\right|^2=|c|^2x^{-1}\notin L^1(0,\infty). 
 \end{equation}
\end{enumerate}
\end{theorem}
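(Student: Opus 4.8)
The plan is to pass to the antiderivative and recognize each identity as an instance of the elementary polarization formula in a weighted $L^2$ space, which is exactly the orthogonality mechanism highlighted in the introduction. For part (1) I would set $F(x)=\int_0^x f(y)\,dy$; the hypothesis $x^{-p+1}|f|^2\in L^1(0,\infty)$ already guarantees, via Cauchy--Schwarz and $\int_0^x y^{p-1}\,dy=x^p/p$, that $f$ is integrable near the origin so that $F$ is well defined and absolutely continuous. Working in $L^2((0,\infty);x^{-p+1}\,dx)$ with $a=f$ and $b=\tfrac{p}{2x}F$, the first equality is literally $\|a\|^2-\|a-b\|^2=2\operatorname{Re}\langle a,b\rangle-\|b\|^2$, so the task reduces to evaluating the two pieces.

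I would dispose of the second equality first, since it is purely pointwise. The product rule gives
\begin{equation}\notag
\frac{d}{dx}\Big(x^{-\frac{p}{2}}F\Big)=x^{-\frac{p}{2}}\Big(f-\frac{p}{2x}F\Big),
\end{equation}
hence $x\big|\frac{d}{dx}(x^{-p/2}F)\big|^2=x^{-p+1}\big|f-\frac{p}{2x}F\big|^2$, which identifies the two forms of the remainder verbatim. For part (2) the same computation with $G(x)=\int_x^\infty f(y)\,dy$ and $G'=-f$ gives $\frac{d}{dx}(x^{p/2}G)=-x^{p/2}(f-\frac{p}{2x}G)$ and the identical conclusion.

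For the first equality I would expand the remainder square and convert the cross term into a total derivative using $\operatorname{Re}(\bar f F)=\tfrac12\frac{d}{dx}|F|^2$, obtaining
\begin{equation}\notag
x^{-p+1}\Big(|f|^2-\big|f-\tfrac{p}{2x}F\big|^2\Big)=\frac{p}{2}x^{-p}\frac{d}{dx}|F|^2-\frac{p^2}{4}x^{-p-1}|F|^2.
\end{equation}
Integrating the first summand by parts produces $+\tfrac{p^2}{2}\int_0^\infty x^{-p-1}|F|^2\,dx$, which combines with the second to yield exactly $(\tfrac{p}{2})^2\int_0^\infty x^{-p-1}|F|^2\,dx$, the left-hand side after rewriting $|\tfrac1x\int_0^x f|^2=x^{-2}|F|^2$. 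The main obstacle is justifying this integration by parts, i.e.\ showing the boundary term $\tfrac{p}{2}[x^{-p}|F|^2]_0^\infty$ vanishes. At the origin Cauchy--Schwarz gives $x^{-p}|F(x)|^2\le\frac1p\int_0^x y^{-p+1}|f|^2\,dy\to0$; at infinity the same bound is only uniform, so there I would use an $\varepsilon$-splitting $F(x)=F(x_0)+\int_{x_0}^x f$ with $\int_{x_0}^\infty y^{-p+1}|f|^2<\varepsilon$ to conclude $\limsup_{x\to\infty}x^{-p}|F|^2\le 2\varepsilon/p$. Part (2) is symmetric: one integrates $-\tfrac{p}{2}x^{p}\frac{d}{dx}|G|^2$ by parts, the boundary term $x^p|G|^2$ vanishing directly at infinity and via the analogous splitting at the origin, both using $x^{p+1}|f|^2\in L^1$.

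Finally, the characterization reads off from the second form of the remainder: it vanishes iff $\frac{d}{dx}(x^{-p/2}F)=0$ a.e., i.e.\ $x^{-p/2}F\equiv c$, which is $\int_0^x f=c\,x^{p/2}$ (respectively $\int_x^\infty f=c\,x^{-p/2}$ in part (2)). Substituting back makes the left-hand integrand equal to $|c|^2x^{-1}$, non-integrable on $(0,\infty)$ unless $c=0$; this simultaneously produces the stated extremizer form and the divergence of the Hardy integral at both endpoints, establishing the nonexistence of nontrivial extremizers.
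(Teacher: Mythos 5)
Your proof is correct and, at its core, it is the paper's own argument: the polarization identity $\|a\|^2-\|a-b\|^2=2\,\mathrm{Re}\langle a,b\rangle-\|b\|^2$ that you expand by hand is exactly the paper's orthogonality lemma (Lemma \ref{lem1}), and your conversion of the cross term via $\mathrm{Re}(\bar f F)=\tfrac12\tfrac{d}{dx}|F|^2$ plus integration by parts is the same computation the paper performs, there written as $\int_0^\infty x^{-p-1}|F|^2dx=\tfrac2p\,\mathrm{Re}\int_0^\infty x^{-p}\bar f(x) F(x)\,dx$ followed by Lemma \ref{lem1} with $c=\tfrac1p$, $u=\tfrac1x\int_0^x f$, $v=-f$ in $L^2(0,\infty;x^{-p+1}dx)$; likewise your pointwise identification of the two forms of the remainder is the paper's ``direct calculation.'' The one genuine difference is scope: the paper runs the computation only for $f\in C_0^\infty$, where all boundary terms vanish trivially, and obtains the theorem for the stated class by a density argument deferred to \cite{m-o-w1, m-o-w3}, whereas you work directly with $x^{-p+1}|f|^2\in L^1(0,\infty)$, killing the boundary term at the origin by Cauchy--Schwarz and at infinity by the $\varepsilon$-splitting (symmetrically in part (2)). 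This makes the proof self-contained at the level of the stated hypotheses, avoiding any appeal to approximation and to the continuity of both sides under limits. One refinement you should make explicit: perform the integration by parts on compact intervals $[\delta,M]\subset(0,\infty)$ and then let $\delta\to0$, $M\to\infty$, since a priori $\int_0^\infty x^{-p-1}|F|^2dx$ is not known to be finite and your manipulation on all of $(0,\infty)$ risks an $\infty-\infty$; on $[\delta,M]$ everything is finite, your estimates show the boundary terms vanish in the limit, and monotone convergence together with $\int_0^\infty x^{-p+1}|f|^2dx<\infty$ yields the equality and, as a byproduct, the finiteness of the Hardy integral itself.
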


In the recent paper \cite{i-i-o}, 
the authors actually extended the equalities in Theorem \ref{thm2} with $n=2$ in $L^n(\Bbb R^n)$-settings. However, we shall re-prove Theorem \ref{thm2} in this paper 
in order to compare the sub-critical case, the critical case and the one-dimensional case 
corresponding to Theorem \ref{thm1}, Theorem \ref{thm2} and Theorem \ref{thm3}, respectively. 

The proofs of all theorems in this paper is essentially based on orthogonality 
in general Hilbert space settings. Therefore, there would be a possibility 
to establish other equalities on the functional spaces equipped with 
the Hilbert structure by applying our method. 

Concerning related references to Hardy type inequalities, we also refer to \cite{b-v}, \cite{co}, \cite{d-j-s-j}, \cite{g-g-m} and \cite{mi}. 
Indeed, when a function $f$ is radially symmetric on the unit ball at the origin, 
the corresponding inequality to \eqref{eq1-5} or \eqref{eq1-6} was obtained by \cite[p.454]{b-v}. 
In \cite{g-g-m}, authors established the Hardy inequalities 
with remainder terms on bounded domains by utilizing the argument used in \cite{b-v}. 
The papers \cite{co}, \cite{d-j-s-j} and \cite{mi} proposed simple proofs of classical Hardy, Rellich and Caffarelli-Kohn-Nirenberg inequalities. 
We study the Hardy type inequalities in a different perspective from those papers 
above in the sense that we derive the inequalities from equalities. 
\\

We prove Theorems \ref{thm1} - \ref{thm3} in subsequent sections. 
For simplicity, we prove the theorems for $f\in C^{\infty}_0(\R^n;\C)$. 
The proofs are completed by density (see \cite{m-o-w1, m-o-w3}).
The main idea of the 
proofs is given by the following ``orthogonality lemma."

\begin{lem}\label{lem1}
Let $X$ be a scalar product space with scalar product $(\cdot|\cdot)$. 
Let $c>0$. Then the following statements are equivalent.
\begin{enumerate}
 \item The equality
 \begin{equation}\label{eq1-23}
 \|u\|^2=-2c\text{\rm Re}(u|v)
 \end{equation}
 holds for all $u,v\in X$. 
 \item The equality
 \begin{equation}\label{eq1-24}
 \|u\|^2=4c^2\|v\|^2-\|u+2cv\|^2
 \end{equation}
 holds for all $u,v\in X$. 
\end{enumerate}
\end{lem}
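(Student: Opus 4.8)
The plan is to prove the equivalence by a single polarization-type expansion of the squared norm, carried out pointwise in the pair $(u,v)$. The whole content of the lemma is the elementary identity
\[
\|u+2cv\|^2=\|u\|^2+4c\,\mathrm{Re}(u|v)+4c^2\|v\|^2,
\]
which I would establish first. Fixing $u,v\in X$ and using that the norm is induced by the scalar product, I would expand $\|u+2cv\|^2=(u+2cv\,|\,u+2cv)$ by sesquilinearity; since $c>0$ is a real scalar, the two cross terms combine as $2c(u|v)+2c(v|u)=2c\big((u|v)+\overline{(u|v)}\big)=4c\,\mathrm{Re}(u|v)$, while the diagonal terms give $\|u\|^2$ and $4c^2\|v\|^2$. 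This yields the displayed identity for every $u,v\in X$.

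Next I would simply rearrange this identity to read
\[
4c^2\|v\|^2-\|u+2cv\|^2=-\|u\|^2-4c\,\mathrm{Re}(u|v).
\]
Comparing with the right-hand side of \eqref{eq1-24}, statement (2) is therefore equivalent, for each fixed pair $(u,v)$, to the equation $\|u\|^2=-\|u\|^2-4c\,\mathrm{Re}(u|v)$, that is, to $2\|u\|^2=-4c\,\mathrm{Re}(u|v)$, which upon dividing by $2$ is exactly \eqref{eq1-23}. Since each step is an algebraic equivalence valid for arbitrary $u,v$, the ``for all'' versions (1) and (2) are equivalent.

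I do not expect any real obstacle here: the lemma is a bookkeeping identity rather than a theorem with hidden difficulty, and the argument uses nothing beyond the definition of the norm and the conjugate symmetry of the scalar product. The only point demanding a little care is the passage to the real part when the scalar product is complex-valued, where one must use that the coefficient $c$ is real so that $2c(u|v)+2c\overline{(u|v)}=4c\,\mathrm{Re}(u|v)$; in a real scalar product space this step is automatic. The significance of the statement lies not in its proof but in its later role: substituting the concrete choices of $u$ and $v$ arising from $\partial_r f$ and the various Hardy weights turns each Hardy identity in Theorems \ref{thm1}--\ref{thm3} into an instance of \eqref{eq1-24}, with the vanishing of the remainder term corresponding precisely to the orthogonality relation \eqref{eq1-23}.
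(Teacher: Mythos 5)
Your proof is correct and is essentially the paper's own argument: both come down to the expansion $\|u+2cv\|^2=\|u\|^2+4c\,\mathrm{Re}(u|v)+4c^2\|v\|^2$, the only cosmetic difference being that the paper pivots through the intermediate orthogonality relation $\mathrm{Re}(u|u+2cv)=0$ while you rearrange the expansion directly. Your care with the complex cross terms and your observation that the equivalence is pointwise in $(u,v)$ (so the ``for all'' quantifiers are harmless) are both sound.
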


\begin{proof}
The lemma follows by another equivalent equality:
\begin{equation}\label{eq1-25}
\text{Re}(u|u+2cv)=0.
\end{equation}
\end{proof}

The standard proof of Hardy inequality is based on the inequality
\begin{equation}\label{eq1-26}
\|u\|^2\le2c\|u\|\|v\|
\end{equation}
by the Cauchy-Schwarz inequality applied to \eqref{eq1-23}. In this paper, 
we regard \eqref{eq1-23} as the orthogonality relation \eqref{eq1-25} of 
$u$ and $u+2cv$ and then we regard $2cv$ as a difference: 
$2cv=(u+2cv)-u$. The equality \eqref{eq1-24} gives an explicit condition for the case of 
equality in \eqref{eq1-26}. 

There are many papers on the Hardy type inequalities and related subjects. We refer the 
readers to \cite{b-e-h-l}--\cite{t} and references therein.

\section{Proof of Theorem \ref{thm1}}

We introduce polar coodinates $(r,\omega)=(|x|,\frac{x}{|x|})\in(0,\infty)\times S^{n-1}$ 
and we write the integral on the left hand side of \eqref{eq1-5} as
\begin{equation}\notag
\begin{split}
\int_{\R^n}\frac{|f(x)|^2}{|x|^2}dx&=\int_0^{\infty}r^{n-3}
\int_{S^{n-1}}|f(r\omega)|^2d\sigma(\omega)dr \\
&=-\frac{2}{n-2}\text{Re}\int_0^{\infty}r^{n-2}
\int_{S^{n-1}}f(r\omega)\overline{\omega\cdot\nabla f(r\omega)}d\sigma(\omega)dr \\
&=-\frac{2}{n-2}\text{Re}\int_{\R^n}\frac{f(x)}{|x|}\overline{\partial_rf(x)}dx,
\end{split}
\end{equation}
where we have carried out integration by parts in the radial variable. 
By Lemma \ref{lem1} with $c=\frac{1}{n-2}, u=\frac{f}{|x|}$, and $v=\partial_rf$, in 
the Hilbert space $L^2(\R^n)$, we obtain \eqref{eq1-5} as \eqref{eq1-24}. 
By a direct calculation, \eqref{eq1-6} follows from \eqref{eq1-5}. The remainder 
terms vanish if and only if $\partial_r(|x|^{\frac{n-2}{2}}f)=0$, namely, 
$|x|^{\frac{n-2}{2}}f$ is a function on the sphere. This completes the proof of 
Theorem \ref{thm1}. 

\section{Proof of Theorem \ref{thm2}}
Let $R>0$. In the same way as in the proof of Theorem \ref{thm1}, we calculate 
\begin{equation}\notag
\begin{split}
&\int_{|x|<R}\frac{|f(x)-f_R(x)|^2}{|x|^n\Big(\log\frac{R}{|x|}\Big)^2}dx\\
&=\int_0^{R}\frac{1}{r\big(\log\frac{R}{r}\big)^2}
\int_{S^{n-1}}|f(r\omega)-f(R\omega)|^2d\sigma(\omega)dr \\
&=-2\text{Re}\int_0^{R}\frac{1}{\log\frac{R}{r}}
\int_{S^{n-1}}(f(r\omega)-f(R\omega))\overline{\omega\cdot\nabla f(r\omega)}d\sigma(\omega)dr \\
&=-2\text{Re}\int_{|x|<R}\frac{f(x)-f_R(x)}{|x|^{\frac{n}{2}}\log\frac{R}{|x|}}
\frac{\overline{\partial_rf(x)}}{|x|^{\frac{n}{2}-1}}dx,
\end{split}
\end{equation}
where we have carried out integration by parts in $r$ and the contribution on the 
boundary vanishes since
$
\log\frac{R}{r}=\int_1^{\frac{R}{r}}\frac{dt}{t}\ge\frac{\frac{R}{r}-1}{\frac{R}{r}}
=\frac{R-r}{R}\ge0$ and
\begin{align}
|f(r\omega)-f(R\omega)|^2\le\|\nabla f\|_{\infty}^2(R-r)^2. \label{eq3-3}
\end{align}
By Lemma \ref{lem1} with $c=1, u=\frac{f-f_R}{|x|^{\frac{n}{2}}\log\frac{R}{|x|}}$, 
and $v=\frac{\partial_rf}{|x|^{\frac{n}{2}-1}}$, in the Hilbert space 
$L^2(B(0;R))$, where $B(0;R)=\{x\in\R^n;|x|<R\}$, we obtain 
\begin{equation}\label{eq3-4}
\begin{split}
&\frac14\left\|\frac{f-f_R}{|x|^{\frac{n}{2}}\log\frac{R}{|x|}}\right\|^2_{L^2(B(0;R))}
=\left\|\frac{\partial_rf}{|x|^{\frac{n}{2}-1}}\right\|^2_{L^2(B(0;R))}\\
&-\left\|\frac{1}{|x|^{\frac{n}{2}-1}}\left(\partial_r f
+\frac{f-f_R}{2|x|\log\frac{R}{|x|}}\right)\right\|^2_{L^2(B(0;R))}.
\end{split}
\end{equation}
Similarly, we obtain
\begin{equation}\notag
\begin{split}
&
\int_{|x|>R}\frac{|f(x)-f_R(x)|^2}{|x|^n\Big(\log\frac{R}{|x|}\Big)^2}dx\\
&=\int_{R}^{\infty}\frac{1}{r\big(\log\frac{R}{r}\big)^2}
\int_{S^{n-1}}|f(r\omega)-f(R\omega)|^2d\sigma(\omega)dr \\
&=-2\text{Re}\int_{R}^{\infty}\frac{1}{\log\frac{R}{r}}
\int_{S^{n-1}}(f(r\omega)-f(R\omega))\overline{\omega\cdot\nabla f(r\omega)}d\sigma(\omega)dr \\
&=-2\text{Re}\int_{|x|>R}\frac{f(x)-f_R(x)}{|x|^{\frac{n}{2}}\log\frac{R}{|x|}}
\frac{\overline{\partial_rf(x)}}{|x|^{\frac{n}{2}-1}}dx,
\end{split}
\end{equation}
where the contribution on the boundary vanishes due to \eqref{eq3-3} and 
\begin{equation}\notag
\left|\log\frac{R}{r}\right|
=\log\frac{r}{R}
=\int_1^{\frac{r}{R}}\frac{dt}{t}\ge\frac{\frac{r}{R}-1}{\frac{r}{R}}
=\frac{r-R}{r}=\frac{|R-r|}{r}.
\end{equation}
In the same way as in the derivation of \eqref{eq3-4}, we obtain
\begin{equation}\label{eq3-7}
\begin{split}
&\frac14\left\|\frac{f-f_R}{|x|^{\frac{n}{2}}\log\frac{R}{|x|}}\right\|^2
_{L^2(\R^n\backslash\overline{B(0;R)})}
=\left\|\frac{\partial_rf}{|x|^{\frac{n}{2}-1}}\right\|^2_{L^2(\R^n\backslash\overline{B(0;R)})}
\\&-\left\|\frac{1}{|x|^{\frac{n}{2}-1}}\left(\partial_r f
+\frac{f-f_R}{2|x|\log\frac{R}{|x|}}\right)\right\|^2_{L^2(\R^n\backslash\overline{B(0;R)})}.
\end{split}
\end{equation}
Then \eqref{eq1-9} follows by adding both sides of \eqref{eq3-4} and \eqref{eq3-7}. 
By a direct calculation, \eqref{eq1-10} follows from \eqref{eq1-9}, where we notice that 
$\partial_rf_R=0$. The rest of the statements follows in the same way as in the 
proof of Theorem \ref{thm1}. 

\section{Proof of Theorem \ref{thm3}}
The integral on the left hand side of \eqref{eq1-13} is rewritten by integration 
by parts as 
\begin{equation}\notag
\begin{split}
\int_0^{\infty}x^{-p+1}\left|\frac{1}{x}\int_0^xf(y)dy\right|^2dx
&=\int_0^{\infty}x^{-p-1}\left|\int_0^xf(y)dy\right|^2dx \\
&=\frac{2}{p}\text{Re}\int_0^{\infty}x^{-p}\overline{f(x)}\int_0^xf(y)dydx \\
&=\frac{2}{p}\text{Re}\int_0^{\infty}x^{-p+1}\Big(\frac{1}{x}\int_0^xf(y)dy\Big)
\overline{f(x)}dx.
\end{split}
\end{equation}
By Lemma \ref{lem1} with $c=\frac{1}{p}, u=\frac{1}{x}\int_0^xf(y)dy$, and 
$v=-f$, in the Hilbert space $L^2(0,\infty;x^{-p+1}dx)$, we obtain \eqref{eq1-13} as 
\eqref{eq1-24}. 

Similarly, the integral on the left hand side of \eqref{eq1-17} is rewritten as 
\begin{equation}\notag
\begin{split}
\int_0^{\infty}x^{p+1}\left|\frac{1}{x}\int_x^{\infty}f(y)dy\right|^2dx
&=\int_0^{\infty}x^{p-1}\left|\int_x^{\infty}f(y)dy\right|^2dx \\
&=\frac{2}{p}\text{Re}\int_0^{\infty}x^{p}\overline{f(x)}\int_x^{\infty}f(y)dydx \\
&=\frac{2}{p}\text{Re}\int_0^{\infty}x^{p+1}\Big(\frac{1}{x}\int_x^{\infty}f(y)dy\Big)
\overline{f(x)}dx.
\end{split}
\end{equation}
By Lemma \ref{lem1} with $c=\frac{1}{p}, u=\frac{1}{x}\int_x^{\infty}f(y)dy$, and 
$v=-f$, in the Hilbert space $L^2(0,\infty;x^{p+1}dx)$, we obtain \eqref{eq1-17} as 
\eqref{eq1-24}. 

The rest of the statements follows by a direct calculation. 
\\

\begin{center}
{\bf\sc Acknowledgement}
\end{center}

We are grateful to the referee for his/her valuable comments.


\end{document}